\def\B'c{{\mathcal{B'}}}
\def\U'c{{\mathcal{U'}}}
\def\opn#1#2{\def#1{\operatorname{#2}}} % to make operators
\opn\chara{char}
\opn\length{\ell}
\opn\cd{cd}
\opn\projdim{proj\,dim}
\opn\injdim{inj\,dim}
\opn\ini{in}
\opn\rank{rank}
\opn\depth{depth}
\opn\height{ht}
\opn\bigheight{bight}
\opn\embdim{emb\,dim}
\opn\codim{codim}
\opn\Tr{Tr}
\opn\bigrank{big\,rank}
\opn\superheight{superheight}\opn\lcm{lcm}
\opn\trdeg{tr\,deg}%
\opn\reg{reg}
\opn\lreg{lreg}
\opn\set{set}
\opn\supp{Supp}
\opn\shad{Shad}
\opn\indeg{indeg}
\opn\lex{lex}
\opn\div{div}
\opn\Div{Div}
\opn\cl{cl}
\opn\Cl{Cl}
\opn\Spec{Spec}
\opn\Supp{Supp}
\opn\supp{supp}
\opn\Sing{Sing}
\opn\Ass{Ass}
\opn\Ann{Ann}
\opn\Rad{Rad}
\opn\Soc{Soc}
\opn\Ker{Ker}
\opn\Coker{Coker}
\opn\Im{Im}
\opn\Hom{Hom}
\opn\Tor{Tor}
\opn\Ext{Ext}
\opn\End{End}
\opn\Aut{Aut}
\opn\id{id}
\opn\nat{nat}
\opn\GL{GL}
\opn\SL{SL}
\opn\mod{mod}
\opn\ord{ord}
\opn\ara{ara}
\opn\aff{aff}
\opn\con{conv}
\opn\relint{relint}
\opn\st{st}
\opn\lk{lk}
\opn\cn{cn}
\opn\core{core}
\opn\vol{vol}
\opn\Ind{Ind}
\opn\gr{gr}
\def\pot#1#2{#1[\kern-0.28ex[#2]\kern-0.28ex]}
\opn\dirlim{\underrightarrow{\lim}}
\opn\invlim{\underleftarrow{\lim}}
\def\pnt{{\raise0.5mm\hbox{\large\bf.}}}
\def\Implies{\ifmmode\Longrightarrow \else
     \unskip${}\Longrightarrow{}$\ignorespaces\fi}
\def\implies{\ifmmode\Rightarrow \else
     \unskip${}\Rightarrow{}$\ignorespaces\fi}
\def\iff{\ifmmode\Longleftrightarrow \else
     \unskip${}\Longleftrightarrow{}$\ignorespaces\fi}
\newtheorem{Theorem}{Theorem}[section]
\newtheorem{Lemma}[Theorem]{Lemma}
\newtheorem{Corollary}[Theorem]{Corollary}
\newtheorem{Proposition}[Theorem]{Proposition}
\let\epsilon=\varepsilon
\let\phi=\varphi
\let\kappa=\varkappa
\numberwithin{equation}{section}
\title{The monomial ideal of independent sets associated to a graph}
\author{Oana Olteanu}
\address{University ``Politehnica" of Bucharest, Faculty of Applied Sciences,
Splaiul Independen\c tei, No.
313, 060042, Bucharest, Romania}
\email{olteanuoanastefania@gmail.com} 
\begin{document}

\maketitle

\begin{abstract} Independent sets play a key role into the study of graphs and important problems arising in graph theory reduce to them. We define the monomial ideal of independent sets associated to a finite simple graph and describe its homological and algebraic invariants in terms of the combinatorics of the graph.
We compute the minimal primary decomposition and characterize the Cohen--Macaulay ideals. Moreover, we provide a formula for computing the Betti numbers, which depends only on the coefficients of the independence polynomial of the graph. \\

Keywords: independent set, linear quotients, linear resolution, primary decomposition, Cohen--Macaulay ring, Betti number, Alexander dual.\\ 

MSC 2010: Primary: 05C69, 13D02; Secondary: 13F20, 05C38.

\end{abstract}

\section*{Introduction}
Graph theory has various applications in many fields, such as computer sciences, statistics and also biology or chemistry. Let $G$ be a simple graph on the vertex set $V(G)$ and the set of edges $E(G)$. An independent set of $G$ is a set of vertices such that there are no two vertices adjacent in $G$. In the literature, independent sets are also called stable sets, and many important problems arising in graph theory can be stated in terms of them. 

We consider two polynomial rings $R=K[r_S: S\mbox{ independent set of }G] \mbox{ and }T=K[s_i,t_i:i\in V(G)]$ over a field $K$, and the ring homomorphism $\phi:R\rightarrow T$ given by $\phi(r_S)=\prod\limits_{i\in S}s_i\prod\limits_{i\notin S}t_i$. It is customary to define the toric ideal $I_G=\ker(\phi)$, which generalizes several classical examples of toric ideals. These ideals have been intensively studied and have important applications in algebraic statistics.

In this paper we define the \textit{monomial ideal of independent sets} associated to the graph $G$ to be the squarefree monomial ideal generated by the monomials $\phi(r_S)$, where $S$ are independent sets of $G$. In fact, the name of these ideals was suggested by \cite{EN}. In their paper \cite{EN}, Engstr\"om and Nor\'en described the independent sets of $G$ as graph homomorphisms from $G$ to the graph with two vertices, one edge and a loop (also called the independence target graph). The ideal of graph homomorphisms is the ideal of independent sets.

For monomial ideals of independent sets, we aim at studying the homological and algebraic invariants of them.  

The paper is structured as follows. The first section represents an overview of the notions and concepts that will be used in this paper. We briefly recall the definition of some numerical invariants attached to a monomial ideal, expressed in terms of the minimal graded free resolution of the ideal, and some useful results related to them.

Section $2$ is the main section of this paper and we describe the properties of the monomial ideals of independent sets. Throughout Section $2$, we characterize the algebraic and homological invariants of the monomial ideal of independent sets associated to graphs. We begin by determining the minimal primary decomposition in Theorem \ref{prim dec}. The minimal primes correspond to the sets of vertices and edges, being of the form $(s_i,t_i)$, with $i\in V(G)$ and $(t_i,t_j)$, with $\{i,j\}\in E(G)$. Next, we prove in Theorem \ref{linear quotients} that the monomial ideal of independent sets associated to a finite simple graph has linear quotients with respect to a given order of its minimal monomial generators, hence it has a linear resolution. As a consequence, in Corollary \ref{invariants}, for the monomial ideal of independent sets of a graph we determine the Krull and projective dimensions, the Betti numbers and the Castelnuovo--Mumford regularity. Moreover, we characterize the monomial ideals of independent sets which are Cohen--Macaulay, by using Alexander duality, in Theorem \ref{dual}. We pay a special attention to the combinatorial information stored by the graph, thus all the invariants are expressed in terms of the combinatorics of the graph. 

In the last section, Section $3$, we analyze some particular classes of graphs. As it follows from previous section, the Betti numbers and the projective dimension of monomial ideal of independent sets are characterized by the number of independent sets of a given cardinality. This information is stored by the independence polynomial, more precisely, by the coefficients of the independence polynomial associated to a graph. Unfortunately, these coefficients are not known for an arbitrary graph. Hence we will apply in our study the results obtained in \cite{A}, \cite{BH}, \cite{LM} and \cite{LM1} for paths, cycles, powers of cycles and centipede graphs. These will allow us to explicitly compute the Betti numbers and the projective dimension of the monomial ideal of independent sets for the mentioned particular classes of graphs.

\section{Preliminaries}
Let $G$ be a simple graph on the vertex set $V(G)=[n]=\{1,2,\ldots,n\}$ and the set of edges $E(G)$. We recall that a set $S$ of vertices of $G$ is \textit{independent} if there are no two elements $i$ and $j$ of $S$ such that $\{i,j\}\in E(G)$. We denote by $\Ind(G)$ the set of all the independent sets of $G$. Let $\alpha(G)$ be the maximal cardinality of an independent set of $G$, called the \textit{independence number} of the graph $G$. 

Important aspects of the combinatorial information of a graph are stored by the independence polynomial, defined by Gutman and Harary \cite{GH}. The \textit{independence polynomial} of the graph $G$ is $I(G;x)=\sum\limits_{i=0}^{\alpha(G)}s_jx^j$, where $s_j$ is the number of independent sets of cardinality $j$ in the graph $G$ and $s_0=1$. The independence polynomial was defined as a natural generalization of the matching polynomial of a graph. Independence polynomials play a key role in studying statistical physics and combinatorial chemistry. This notion will be useful in Section $3$. We refer the reader to \cite{GH}, \cite{LM}, \cite{LM1} for more information concerning the independence polynomial.

In the sequel, we consider the polynomial rings over a field $K$:
$$R=K[r_S: S\in \Ind(G)] \mbox{ and }T=K[s_i,t_i:i\in V(G)],$$
and the ring homomorphism $\phi:R\rightarrow T$ given by $\phi(r_S)=\prod\limits_{i\in S}s_i\prod\limits_{i\notin S}t_i$. Let $I\subset T$ be the squarefree monomial ideal generated by the monomials $\phi(r_S)$, where $S$ is an independent set of $G$. We call the ideal $I$ the \textit{monomial ideal of independent sets} associated to the graph $G$. We will characterize the homological and algebraic invariants of the ideal $I$, using the combinatorial data enclosed by the graph $G$.

Furthermore, we recall the main invariants of the monomial ideals, which we will study later in this paper. Let $S=K[x_1,\ldots,x_n]$ be the polynomial ring in $n$ variables over the field $K$. We order the monomials in $S$ lexicographically with $x_1>_{lex}\cdots>_{lex}x_n$. For a monomial ideal $I\subset S$, we will denote by $G(I)$ the set of minimal monomial generators of $I$. The minimal graded free resolution of $I$ over $S$ is:
$$0\rightarrow\bigoplus_{j}S(-j)^{\beta_{p,j}}\rightarrow\ldots \rightarrow\bigoplus_{j}S(-j)^{\beta_{1,j}}\rightarrow\bigoplus_{j}S(-j)^{\beta_{0,j}}\rightarrow I\rightarrow 0.$$
The \textit{Betti numbers} of $I$ are defined by $\beta_i(I)=\sum_{j}\beta_{i,j}(I)$. The \textit{projective dimension} of $S/I$ is $\projdim(S/I)=\max\{i:\beta_{i,j}(S/I)\neq 0, \mbox{ for some }j\}=p+1$, and the \textit{Castelnuovo--Mumford regularity} of $I$ is given by $\reg(I)=\max\{j-i:\beta_{i,j}(I)\neq 0\}$.

We recall that a monomial ideal $I\subset S$ has \textit{linear quotients} if there is an order of the minimal monomial set of generators of $I$, $u_1,\ldots, u_s$ such that for all $2\leq i\leq s$ the colon ideals $(u_1,\ldots,u_{i-1}):u_i$ are generated by variables. In this case, we will denote by $\set(u_i)=\{x_j:x_j\in(u_1,\ldots,u_{i-1}):u_i\}$.

A useful formula to compute the Betti numbers of ideals with linear quotients was given in \cite{HH}:

\begin{Proposition}\cite{HH}\label{betti}
Let $I\subset S$ be a graded ideal with linear quotients, generated in one degree. Then
	\[\beta_i(I)=\sum\limits_{u\in G(I)}{|\set(u)|\choose i}.
\]
In particular, it follows that $\projdim(I)=\max\{|\set(u)|:u\in G(I)\}$.
\end{Proposition}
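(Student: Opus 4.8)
The plan is to build the minimal graded free resolution of $S/I$ by an iterated mapping cone along the chain of ideals cut out by the linear quotients order. Let $u_1,\dots,u_s$ be the generators listed in an order realizing linear quotients, all of the same degree $d$, and put $I_k=(u_1,\dots,u_k)$. For each $k\ge 2$ there is a short exact sequence
$$0\To \bigl(S/(I_{k-1}:u_k)\bigr)(-d)\ \stackrel{\cdot u_k}{\To}\ S/I_{k-1}\To S/I_k\To 0,$$
in which multiplication by $u_k$ has image $I_k/I_{k-1}$ and kernel annihilated precisely by $I_{k-1}:u_k$. The linear quotients hypothesis says $I_{k-1}:u_k$ is generated by the $r_k:=|\set(u_k)|$ variables in $\set(u_k)$; since distinct variables form a regular sequence, the minimal free resolution of $S/(I_{k-1}:u_k)$ is the Koszul complex on those variables, with $i$-th Betti number ${r_k \choose i}$.

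Resolving $S/I_{k-1}$ inductively and lifting $\cdot u_k$ to a comparison map between the two resolutions, the mapping cone of this lift resolves $S/I_k$; iterating over $k=1,\dots,s$ resolves $S/I$. The crux — and the only genuine use of the single-degree hypothesis — is that each mapping cone is minimal, i.e. all differential entries lie in the graded maximal ideal $\mm$. I would check this by a degree count carried through the induction: since $I$ is generated in the one degree $d$ and has linear quotients, the resolution of each $S/I_{k-1}$ is linear, so for $i\ge 1$ its $i$-th free module sits in internal degree $d+i-1$, whereas the shifted Koszul resolution of $(S/(I_{k-1}:u_k))(-d)$ has its $i$-th free module in degree $d+i$. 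The comparison map lifting $\cdot u_k$ thus sends, in each homological degree $i\ge 1$, generators of degree $d+i$ into a module generated in degree $d+i-1$, so its entries have degree one; in homological degree zero its single entry is $u_k\in\mm$. Together with the Koszul differentials (which are variables) and the differentials of the resolution of $S/I_{k-1}$ (minimal by induction), every entry of the cone differential lies in $\mm$, the cone is minimal, and its $i$-th term $G_{i-1}\oplus F_i$ stays concentrated in degree $d+i-1$, closing the induction on linearity.

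Minimality makes the Betti numbers add across each cone, $\beta_i(S/I_k)=\beta_i(S/I_{k-1})+{r_k \choose i-1}$, and telescoping from $I_1$ to $I_s=I$ (using $\set(u_1)=\emptyset$, so $r_1=0$) gives $\beta_i(S/I)=\sum_{k=1}^{s}{r_k \choose i-1}$. Passing to the ideal via $\beta_i(I)=\beta_{i+1}(S/I)$ yields
$$\beta_i(I)=\sum_{k=1}^{s}{r_k \choose i}=\sum_{u\in G(I)}{|\set(u)| \choose i},$$
as claimed. For the projective dimension, the top nonvanishing Betti number of $I$ occurs at $i=\max_k r_k$, since ${r_k \choose i}=0$ once $i>r_k$ while ${r_k \choose r_k}=1$; hence $\projdim(I)=\max\{|\set(u)|:u\in G(I)\}$.
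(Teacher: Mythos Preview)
The paper does not give its own proof of this proposition; it is quoted as a preliminary result from \cite{HH}, so there is nothing in the paper to compare against. Your argument is the standard one found in that reference: build the resolution by iterated mapping cones along the short exact sequences $0\to (S/(I_{k-1}:u_k))(-d)\to S/I_{k-1}\to S/I_k\to 0$, use the Koszul complex on the variables in $\set(u_k)$ for the left-hand term, and verify minimality of each cone by the degree comparison you carried out. The degree bookkeeping is correct (the shifted Koszul piece sits one internal degree above the inductively linear resolution of $S/I_{k-1}$, forcing all comparison entries into $\mm$), and the telescoping to $\beta_i(I)=\sum_u{|\set(u)|\choose i}$ is fine. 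Your proof is correct and essentially reproduces the argument in the cited source.
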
 

It is known that any monomial ideal generated in one degree, which has linear quotients, has a linear resolution, \cite{CH}. In \cite{HHZ}, the monomial ideals generated in degree $2$ with a linear resolution are described.

\begin{Theorem}\cite{HHZ}\label{powers}
Let $I$ be a monomial ideal generated in degree $2$. The following conditions are equivalent:
\begin{itemize}
\item [(a)] $I$ has a linear resolution;
\item [(b)] $I$ has linear quotients;
\item [(c)] Each power of $I$ has a linear resolution.
\end{itemize}
\end{Theorem}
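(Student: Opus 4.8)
The plan is to dispose of the two easy implications first and then concentrate on the substance. The implication (b) $\Rightarrow$ (a) is precisely the result of \cite{CH} quoted above: a monomial ideal generated in a single degree with linear quotients has a linear resolution, and here $I$ is generated in degree $2$. The implication (c) $\Rightarrow$ (a) is immediate, taking the first power $I^1=I$. Thus the real content lies in proving (a) $\Rightarrow$ (b) and in upgrading linearity of $I$ to linearity of every power of $I$.

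First I would reduce the equivalence of (a) and (b) to the squarefree case by polarization. If $I$ is generated in degree $2$ but is not squarefree, let $I^{\mathrm{pol}}$ denote its polarization in a larger polynomial ring. Polarization preserves all graded Betti numbers, so $I$ has a linear resolution if and only if $I^{\mathrm{pol}}$ does, and it likewise preserves the property of having linear quotients. Hence it suffices to treat the case where $I$ is squarefree and generated in degree $2$, that is, where $I=I(G)$ is the edge ideal of a finite simple graph $G$, its variables being the vertices and its degree-$2$ generators being the edges.

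Next I would invoke Fr\"oberg's theorem, which characterizes linearity of an edge ideal: $I(G)$ has a linear resolution if and only if the complementary graph $\bar G$ is chordal. Assuming (a), we conclude that $\bar G$ is chordal, equivalently that $\bar G$ admits a perfect elimination ordering $v_1,\dots,v_n$ of its vertices, i.e.\ an ordering in which each $v_i$ is simplicial in the subgraph of $\bar G$ induced on $\{v_i,\dots,v_n\}$. I would then use this ordering to order the edges of $G$ and check directly that each colon ideal $(u_1,\dots,u_{i-1}):u_i$ is generated by variables: the simpliciality condition in $\bar G$ is exactly what produces, for every earlier generator overlapping $u_i$, a single connecting variable lying in the colon ideal. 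This yields (b), and together with (b) $\Rightarrow$ (a) gives the equivalence of (a) and (b).

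The deepest step, and the main obstacle, is establishing (c): that every power $I(G)^k$ has a linear resolution once $\bar G$ is chordal. Here Fr\"oberg's theorem does not apply, since the powers are no longer squarefree, so I would instead exhibit linear quotients for $I(G)^k$ and then appeal again to \cite{CH}, now in degree $2k$. The minimal generators of $I(G)^k$ are degree-$2k$ monomials arising as products of $k$ edges, and the task is to order them so that each colon ideal is generated by variables. I expect this to be the delicate heart of the argument: one must leverage the perfect elimination ordering of $\bar G$ to build a compatible order on these product monomials and then carry out a careful analysis of the resulting colon ideals, controlling the substantial combinatorial redundancy among products of edges. Once linear quotients are secured for all $k$, assertion (c) follows in the squarefree case, and returning to general monomial ideals generated in degree $2$ via polarization—taking care over the nontrivial interaction between polarization and the formation of powers—completes the proof.
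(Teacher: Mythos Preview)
The paper does not give a proof of this theorem at all: it is stated with the citation \cite{HHZ} and used as a black box (in the proof of Theorem~\ref{dual}), so there is nothing in the paper to compare your argument against. Your sketch is in fact close in spirit to the original Herzog--Hibi--Zheng argument: reduce to the edge-ideal case, use Fr\"oberg's theorem to get chordality of the complement, and then exploit a perfect elimination ordering to produce linear quotients for $I$ and for its powers.

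That said, your outline has a genuine gap at the end. You correctly flag the ``nontrivial interaction between polarization and the formation of powers,'' but this is not a detail one can simply take care of: polarization does \emph{not} commute with taking powers, so proving that $(I^{\mathrm{pol}})^k$ has a linear resolution does not by itself give the same for $I^k$ when $I$ has generators of the form $x_i^2$. In the actual HHZ proof this is handled differently (one works directly with the non-squarefree generators rather than polarizing before taking powers), and your plan as written does not close this loop.
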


A very useful tool in characterizing the squarefree monomial ideals which are Cohen--Macaulay is the Eagon--Reiner theorem \cite{ER}. The result uses concepts such as simplicial complex, Stanley--Reisner ideal and Alexander duality, for which we refer the reader to \cite{HH}.

\begin{Theorem}[Eagon--Reiner, \cite{ER}]\label{Eagon--Reiner}
Let $\Delta$ be a simplicial complex on $[n]$. Then the Stanley--Reisner ideal $I_{\Delta}\subset S$ has a linear resolution if and only if $S/I_{\Delta^\vee}$ is Cohen--Macaulay. More precisely, $I_{\Delta}$ has a $q-$linear resolution if and only if $S/I_{\Delta^\vee}$ is Cohen--Macaulay of dimension $n-q$.
\end{Theorem}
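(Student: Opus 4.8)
The plan is to translate each side of the equivalence into a statement about the vanishing of reduced simplicial homology of induced subcomplexes of $\Delta$, and then to observe that combinatorial Alexander duality makes the two families of conditions coincide. The engine on the algebraic side is Hochster's formula, and on the combinatorial side Reisner's criterion.

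First I would record Hochster's formula for the Stanley--Reisner ring: for all $i,j$,
\[\beta_{i,j}(S/I_\Delta)=\sum_{W\subseteq[n],\ |W|=j}\dim_K\tilde H_{j-i-1}(\Delta_W;K),\]
where $\Delta_W$ is the restriction of $\Delta$ to $W$. Since $\beta_{i,j}(I_\Delta)=\beta_{i+1,j}(S/I_\Delta)$, saying that $I_\Delta$ has a $q$-linear resolution is equivalent to $\beta_{i,j}(S/I_\Delta)=0$ for all $i\ge1$ with $j\ne i+q-1$; as Betti numbers are nonnegative, each summand must vanish separately, so this is the same as requiring $\tilde H_{\ell}(\Delta_W;K)=0$ for every $W$ and every $\ell\le|W|-2$ with $\ell\ne q-2$. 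In particular $I_\Delta$ is then generated in degree $q$, so every minimal nonface of $\Delta$ has cardinality $q$; taking complements, every facet of $\Delta^\vee$ has cardinality $n-q$, whence $\Delta^\vee$ is pure and $\dim(S/I_{\Delta^\vee})=n-q$. This already pins down the dimension in the refined statement.

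Next I would bring in the dual. The key combinatorial identity is that, for $\sigma\in\Delta^\vee$, the link $\lk_{\Delta^\vee}(\sigma)$ equals the Alexander dual of $\Delta_{[n]\setminus\sigma}$ taken inside the ground set $[n]\setminus\sigma$; this is a direct set-theoretic check from the definition of $\Delta^\vee$. Combinatorial Alexander duality over $K$ then yields $\tilde H_i(\lk_{\Delta^\vee}(\sigma);K)\cong\tilde H^{\,n-|\sigma|-i-3}(\Delta_{[n]\setminus\sigma};K)$, and over a field reduced homology and cohomology have the same dimension. By Reisner's criterion, $S/I_{\Delta^\vee}$ is Cohen--Macaulay exactly when $\tilde H_i(\lk_{\Delta^\vee}(\sigma);K)=0$ for all $\sigma\in\Delta^\vee$ and all $i<\dim\lk_{\Delta^\vee}(\sigma)$. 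Feeding the duality isomorphism into this criterion turns it into a vanishing statement for $\tilde H_\ell(\Delta_W;K)$ with $W=[n]\setminus\sigma$, in a prescribed range of degrees $\ell=|W|-i-3$.

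The heart of the argument, and the step I expect to be the main obstacle, is checking that the range of degrees forced to vanish by the Cohen--Macaulay (Reisner) condition is \emph{exactly} the range forced by the $q$-linear resolution condition coming from Hochster's formula, namely $\ell\ne q-2$ with $\ell\le|W|-2$. This is pure index bookkeeping: one must push $i<\dim\lk_{\Delta^\vee}(\sigma)$ and the purity of $\Delta^\vee$ through the substitutions $\ell=n-|\sigma|-i-3$ and $|W|=n-|\sigma|$, and verify degree by degree that the two conditions cut out the same set of pairs $(W,\ell)$; the nonnegativity of Betti numbers is what lets one pass freely between the ``sum over all $W$ of fixed size'' formulation and the subcomplex-by-subcomplex formulation. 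Once the ranges are matched, the equivalence is immediate, and the degree $q$ on one side corresponds precisely to the dimension $n-q$ on the other. Equivalently, one can package the same computation as Terai's identity $\reg(I_\Delta)=\projdim(S/I_{\Delta^\vee})$ and combine it with the Auslander--Buchsbaum formula, which is slicker but hides the homological content in the proof of that identity.
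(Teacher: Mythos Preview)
The paper does not prove this theorem at all: it is quoted from \cite{ER} in the Preliminaries as a known tool, with no argument given. So there is no ``paper's own proof'' to compare against.

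That said, your outline is the standard Eagon--Reiner argument and is essentially correct. One small sharpening: when you translate the $q$-linear condition via Hochster, the vanishing you actually need to match against Reisner's criterion is $\tilde H_\ell(\Delta_W;K)=0$ for $q-2<\ell\le|W|-2$; the range $\ell<q-2$ is automatic once $I_\Delta$ is generated in degree $q$ (no minimal nonface of size $<q$ forces the relevant Betti numbers to vanish for degree reasons). With that in hand, the index bookkeeping you flag as the obstacle goes through cleanly: purity of $\Delta^\vee$ gives $\dim\lk_{\Delta^\vee}(\sigma)=n-q-1-|\sigma|$, and under $W=[n]\setminus\sigma$, $\ell=|W|-i-3$, the Reisner range $-1\le i<\dim\lk_{\Delta^\vee}(\sigma)$ becomes exactly $q-1\le\ell\le|W|-2$. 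Your alternative packaging via Terai's identity $\reg(I_\Delta)=\projdim(S/I_{\Delta^\vee})$ together with Auslander--Buchsbaum is also a legitimate route.
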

Here $\Delta^\vee$ means the Alexander dual of $\Delta$. In this paper, we will also denote $I_{\Delta^\vee}$ by $I^\vee$.

\section{Invariants of the monomial ideal of independent sets} 
This section is devoted to determining some algebraic and homological invariants of the monomial ideal of independent sets associated to a graph $G$. The characterizations aim to point out the combinatorial aspects of the graph. We start our study with the standard decomposition of monomial ideals of independent sets. 

\begin{Theorem}\label{prim dec}
Let $G=(V(G),E(G))$ be a finite simple graph and $I\subset T$ be the monomial ideal of independent sets, with its minimal monomial generating set $$G(I)=\{m_S=\prod\limits_{i\in S}s_i\prod\limits_{i\notin S}t_i| S\in \Ind(G)\}.$$
Then the minimal primary decomposition of $I$ is 
$$I=\bigcap\limits_{i\in V(G)}(s_i,t_i)\cap \bigcap\limits_{\{i,j\}\in E(G)}(t_i,t_j).$$
\end{Theorem}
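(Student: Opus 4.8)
The plan is to prove the asserted equality of squarefree monomial ideals by a double inclusion and then to check minimality. Since both sides are squarefree monomial ideals (the right-hand side is an intersection of monomial primes), it is enough to verify that they contain the same squarefree monomials. A general squarefree monomial in $T$ has the form $m=\prod_{i\in A}s_i\prod_{i\in B}t_i$ for some $A,B\subseteq V(G)$, so the whole argument reduces to tracking membership of such an $m$.

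First I would establish $I\subseteq\bigcap_{i\in V(G)}(s_i,t_i)\cap\bigcap_{\{i,j\}\in E(G)}(t_i,t_j)$ by showing that each generator $m_S$ lies in every prime on the right. For a vertex $i$, the monomial $m_S$ is divisible by exactly one of $s_i$ (if $i\in S$) or $t_i$ (if $i\notin S$), hence $m_S\in(s_i,t_i)$. For an edge $\{i,j\}\in E(G)$, independence of $S$ forbids $\{i,j\}\subseteq S$, so at least one endpoint lies outside $S$ and the corresponding $t$-variable divides $m_S$; thus $m_S\in(t_i,t_j)$.

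The reverse inclusion carries the combinatorial content, and I expect it to be the main point to get right. I would translate membership of $m=\prod_{i\in A}s_i\prod_{i\in B}t_i$ in the right-hand side into two conditions: requiring $m\in(s_i,t_i)$ for every $i$ forces $A\cup B=V(G)$, while requiring $m\in(t_i,t_j)$ for every edge forces $B$ to meet each edge, i.e.\ $B$ is a vertex cover of $G$. The key observation is the vertex-cover/independent-set duality: setting $S:=V(G)\setminus B$ yields an independent set, so $m_S\in G(I)$; and since $A\cup B=V(G)$ every index outside $B$ already lies in $A$, which is precisely what is needed to conclude $m_S\mid m$, hence $m\in I$. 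Identifying the right-hand side as exactly the monomials whose $t$-support is a vertex cover is the crux of this direction.

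Finally I would verify minimality. The listed primes are pairwise incomparable: distinct $(s_i,t_i)$ and $(s_k,t_k)$ involve different variable pairs; no $(t_i,t_j)$ can contain some $(s_k,t_k)$, since the latter involves an $s$-variable; and $(t_i,t_j)\subseteq(t_k,t_\ell)$ only when the two edges coincide. As $I$ is radical, its primary decomposition is the intersection of its minimal primes, and the relation $V(I)=\bigcup V(\mathfrak p)$ over the displayed primes, together with their incomparability, shows that these primes are exactly the minimal primes of $I$. Consequently the displayed intersection is irredundant and is the minimal primary decomposition.
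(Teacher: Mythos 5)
Your proof is correct, but it follows a genuinely different route from the paper's. You prove the ideal equality directly by double inclusion at the level of squarefree monomials: the easy inclusion by checking each generator $m_S$ against each listed prime, and the hard inclusion by recognizing that membership of $m=\prod_{i\in A}s_i\prod_{i\in B}t_i$ in the right-hand side means exactly that $A\cup B=V(G)$ and $B$ is a vertex cover, so that $S=V(G)\setminus B$ is independent and $m_S\mid m$; irredundancy then follows from pairwise incomparability of the primes. The paper instead works on the prime side: it takes an arbitrary minimal prime $\mathfrak{p}\supseteq I$, uses the generators $m_\emptyset$ and $m_{\{i\}}$ to force variables into $\mathfrak{p}$, and case-analyzes to show $\mathfrak{p}$ must equal some $(s_i,t_i)$ or some $(t_i,t_j)$ with $\{i,j\}\in E(G)$, implicitly invoking the fact that a squarefree monomial ideal is the intersection of its minimal primes. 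Your vertex-cover/independent-set duality argument buys an explicit, self-contained verification of the equality (and of irredundancy), and is arguably tighter than the paper's Case 2, which establishes that $(t_i,t_j)\supseteq I$ holds precisely for edges but does not fully argue why the pair $\{i,j\}$ arising in that case must be an edge; the paper's approach, in exchange, is shorter and identifies the minimal primes directly without any monomial bookkeeping.
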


\begin{proof}
Let $\frak p$ be a minimal prime ideal which contains $I$. There is an integer $i\in V(G)$ such that $t_i\in \frak p$, since $\emptyset$ is an independent set of the graph $G$. The set $\{i\}$ being independent implies that $s_i\in \frak p$ or $s_i\notin \frak p$ and $t_j\in \frak p$, for some $j\in V(G)$. We analyze these two cases:

\textit{Case 1:} If $s_i \in \frak p$, then we conclude that $\frak p\supseteq(s_i,t_i)\supset I$, and the equality follows by the minimality of $\frak p$.

\textit{Case 2:} Assume that $s_i\notin \frak p$ and $t_j\in \frak p$, for some $j\in V(G)$. 

We claim that $(t_i,t_j)$ is a minimal prime ideal of $I$ if and only if $\{i,j\}\in E(G)$. Indeed, assume by contradiction that $\{i,j\}\notin E(G)$. Then there is an independent set $S$ of the graph such that $\{i,j\}\in S$. These implies that the monomial $m_S \in I$ and $m_S\notin (t_i,t_j)$, contradiction. Conversely, if $\{i,j\}\in E(G)$, then $i$ and $j$ cannot be both in the same independent set. Hence any monomial $m_S \in G(I)$ is divisible at least by one of $t_i$ or $t_j$, thus $(t_i,t_j)\supset I$ is a minimal prime ideal.

Therefore $\frak p\supseteq (t_i,t_j)\supset I$, which ends the proof. 
\end{proof}

One may note that the minimal primary decomposition of a monomial ideal of independent sets associated to a graph $G$ can be written just looking to the sets of edges and vertices of the graph, as it follows from Theorem \ref{prim dec}.

Furthermore, we will use the following notations. Let $G=(V(G),E(G))$ be a finite simple graph and $I\subset T=K[s_i,t_i: i\in V(G)]$ be the squarefree monomial ideal with $$G(I)=\{m_i=\prod\limits_{r\in S_i}s_r\prod\limits_{r\notin S_i}t_r| S_i\in \Ind(G)\}$$ 
its minimal monomial generating set. For a monomial $m_i=\prod\limits_{r\in S_i}s_r\prod\limits_{r\notin S_i}t_r\in G(I)$, we denote by $m_i^{(s)}$ and $m_i^{(t)}$ the $s-$part and $t-$part, namely $m_i^{(s)}=\prod\limits_{r\in S_i}s_r$, $m_i^{(t)}=\prod\limits_{r\notin S_i}t_r$ respectively. Moreover, by $\deg_s(m_i)$ and $\deg_t(m_i)$ we refer to the degree of the monomials $m_i^{(s)}$ and $m_i^{(t)}$. We consider the lexicographical order on the monomials in $K[s_i: i\in V(G)=\{1,2,\ldots,n\}]$ with $s_1>_{\lex}s_2>_{\lex}\ldots >_{\lex} s_n$. By $m_i>_{lex}m_j$ we mean that $m_i^{(s)}>_{\lex}m_j^{(s)}$. Next, we define the following monomial order on the monomials in $T$: $m_i\succ m_j \mbox{ if and only if }\deg_s(m_i)<\deg_s(m_j) \mbox { or, } \deg_s(m_i)=\deg_s(m_j) \mbox{ and } m_i>_{lex}m_j.$

With these notations, one has:

\begin{Theorem}\label{linear quotients}
Let $I\subset T$ be the monomial ideal of independent sets whose minimal monomial generating set $G(I)=\{m_i=\prod\limits_{r\in S_i}s_r\prod\limits_{r\notin S_i}t_r: S_i \in \Ind(G)\}$ is ordered decreasing in the $\prec$ order. Then $(m_1,\ldots,m_{i-1}):(m_i)=(t_r: r\in S_i)$, for all $i>1$.  
\end{Theorem}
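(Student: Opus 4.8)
The plan is to compute the colon ideal explicitly and then match it against $(t_r:r\in S_i)$ by a double inclusion. For monomial ideals one has the standard reduction
$$(m_1,\dots,m_{i-1}):(m_i)=\sum_{j<i}\bigl(m_j/\gcd(m_j,m_i)\bigr),$$
so the first step is to express each generator $m_j/\gcd(m_j,m_i)$ combinatorially. Since all the $m_i$ are squarefree, the gcd takes the common variables, giving $\gcd(m_i,m_j)=\prod_{r\in S_i\cap S_j}s_r\prod_{r\notin S_i\cup S_j}t_r$, whence
$$m_j/\gcd(m_i,m_j)=\prod_{r\in S_j\setminus S_i}s_r\prod_{r\in S_i\setminus S_j}t_r.$$
This identity is the computational backbone of both inclusions.

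For the inclusion $(t_r:r\in S_i)\subseteq (m_1,\dots,m_{i-1}):(m_i)$, I would fix $r\in S_i$ and pass to $S'=S_i\setminus\{r\}$, which is again independent because subsets of independent sets are independent. As $|S'|=|S_i|-1$, the generator $m_{S'}$ has strictly smaller $s$-degree than $m_i$, so $m_{S'}\succ m_i$ and hence $m_{S'}$ occurs among $m_1,\dots,m_{i-1}$. A direct comparison of the two monomials gives $t_r m_i=s_r\,m_{S'}$, so $m_{S'}\mid t_r m_i$ and therefore $t_r\in (m_1,\dots,m_{i-1}):(m_i)$, as required.

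For the reverse inclusion it suffices, by the displayed formula, to show that for every $j<i$ the $t$-part $\prod_{r\in S_i\setminus S_j}t_r$ is nontrivial, i.e.\ that $S_i\setminus S_j\neq\emptyset$; for then any $r\in S_i\setminus S_j$ yields a variable $t_r$ with $r\in S_i$ dividing $m_j/\gcd(m_i,m_j)$, placing this generator in $(t_r:r\in S_i)$. This is exactly where the chosen order enters. Since $j<i$ means $m_j\succ m_i$, the primary sorting key forces $\deg_s(m_j)\le\deg_s(m_i)$, that is $|S_j|\le|S_i|$. If $|S_j|<|S_i|$, then $S_i\not\subseteq S_j$ for cardinality reasons; if $|S_j|=|S_i|$, then $S_i\ne S_j$ (distinct generators correspond to distinct independent sets), and equal cardinality together with $S_i\ne S_j$ again forces $S_i\not\subseteq S_j$. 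In either case $S_i\setminus S_j\neq\emptyset$.

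The main obstacle is this last step: one must verify $S_i\not\subseteq S_j$ for every earlier generator, and this is precisely the role of the order $\succ$. The essential feature being used is that $\succ$ sorts the generators by increasing $s$-degree, equivalently by increasing cardinality of the underlying independent set; the lexicographic refinement only serves to make $\succ$ a total order and plays no further role in the argument. Combining the two inclusions yields $(m_1,\dots,m_{i-1}):(m_i)=(t_r:r\in S_i)$, which is generated by variables and hence establishes the linear quotients property, with $\set(m_i)=\{t_r:r\in S_i\}$.
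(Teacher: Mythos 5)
Your proof is correct, and its overall shape --- a double inclusion whose easy direction passes to $S'=S_i\setminus\{r\}$ and uses $t_r m_i = s_r m_{S'}$ --- is the same as the paper's. The genuine difference is in the reverse inclusion. The paper takes an arbitrary monomial $M$ in the colon ideal, picks $j<i$ with $m_j\mid Mm_i$, and then splits into two cases according to the definition of $\succ$: when $\deg_s(m_j)<\deg_s(m_i)$ it finds $r\in S_i\setminus S_j$ by a counting argument, and when the $s$-degrees are equal it extracts such an $r$ from the lexicographic comparison of the $s$-parts (a step involving some index juggling, which in the printed proof even has small index typos). You instead invoke the standard description of the colon ideal as $\sum_{j<i}\bigl(m_j/\gcd(m_j,m_i)\bigr)$, compute each generator explicitly as $\prod_{r\in S_j\setminus S_i}s_r\prod_{r\in S_i\setminus S_j}t_r$, and reduce both cases at once to the single combinatorial fact $S_i\not\subseteq S_j$, which you deduce from $|S_j|\le |S_i|$ together with $S_j\neq S_i$. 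This buys two things: a uniform argument with no case analysis on the tie-break, and the sharper observation --- implicit but never stated in the paper --- that the lexicographic refinement is irrelevant: any total order on $G(I)$ refining the order by $s$-degree (equivalently, by cardinality of the independent sets) produces the same linear quotients. The paper's Case 2, by contrast, appears to use the lexicographic structure, although what it really exploits is again only that $S_i$ and $S_j$ are distinct sets of equal cardinality.
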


\begin{proof}
Let $i>1$ and $M$ be a monomial in the colon ideal $(m_1,\ldots,m_{i-1}):(m_i)$. Then there is some $j<i$ such that $m_j\mid Mm_i$, where $m_j\in G(I)$. Since $m_j\succ m_i$, we have to distinguish between the following two cases:

\textit{ Case 1:} If $\deg_s(m_j)<\deg_s(m_i)$, then there is an integer $r\in S_i$ such that $r\notin S_j$. We obtain that $t_r\mid m_j$ and $t_r\nmid m_i$. Therefore $t_r\mid M$ and $M\in (t_r: r\in S_i)$.

\textit{Case 2:} If $\deg_s(m_i)=\deg_s(m_j)$ and $m_j>_{lex}m_i$, then we denote $m_j^{(s)}=s_{r_1}\cdots s_{r_p}$ and $m_i^{(s)}=s_{q_1}\cdots s_{q_p}$. Since $m_j>_{\lex} m_i$, it follows that $r_1=q_1,\ldots, r_c=q_c$ and $r_{c+1}<q_{c+1}$ for some $c>0$. By degree consideration, $s_{r_l}\nmid m_j^{(s)}$, for some $l\geq c+1$. It results that $t_{r_l}\mid m_j$ and $t_{r_l}\nmid m_i$, thus $M\in (t_r: r\in S_i)$.
 
Conversely, let $r\in S_i$ and consider the monomial $m_j=t_rm_i/s_r$. Then it is clear that $m_j\in G(I)$ since $S_i\setminus\{r\}$ it remains an independent set of $G$. Moreover, $\deg_s(m_j)<\deg_s(m_i)$ implies that $m_j\succ m_i$. Therefore $t_r \in (m_1,\ldots,m_{i-1}):(m_i)$, which ends the proof.
\end{proof}

As a consequence, one may note that if we order the minimal monomial generating set $G(I)=\{m_i: S_i \in \Ind(G)\}$ decreasing in the $\prec$ order, then $|\set(m_i)|$ equals the cardinality of the independent set $S_i$. We denote by $s_k$ the number of independent sets with $k$ elements of the graph $G$.

From the previous two results, we obtain the following characterization of the numerical invariants of the monomial ideal of independent sets associated to the graph $G$. 

\begin{Corollary}\label{invariants}
In the same hypothesis, one has:
\begin{itemize}
	\item [(a)] $I$ has a linear resolution;
	\item [(b)] The Castelnuovo--Mumford regularity of $I$ is $\reg(I)=|V(G)|$;
	\item [(c)] The Betti numbers of $I$ are $\beta_i(I)=\sum\limits_{k=0}^{\alpha(G)}s_k{k \choose i}$, for $i\geq 0$;
	\item [(d)] The projective dimension of $T/I$ is $\projdim(T/I)=\alpha(G)+1;$
	\item [(e)] The Krull dimension of $T/I$ is $\dim(T/I)=2|V(G)|-2$;
	\item [(f)] $T/I$ is Cohen--Macaulay if and only if $G$ is the complete graph. 
\end{itemize}
\end{Corollary}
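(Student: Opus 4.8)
The plan is to read off all six statements from the structural results already in hand, treating (f) as the only part that requires genuine work. The key preliminary observation for (a) and (b) is that every generator $m_S=\prod_{i\in S}s_i\prod_{i\notin S}t_i$ has degree exactly $|V(G)|$, since each vertex contributes precisely one variable (either $s_i$ if $i\in S$ or $t_i$ if $i\notin S$). Thus $I$ is generated in the single degree $|V(G)|$, and by Theorem~\ref{linear quotients} it has linear quotients; the cited result \cite{CH} then gives a linear resolution, which is (a). For (b), a linear resolution of an ideal generated in degree $d$ forces $\beta_{i,j}(I)\neq 0$ only for $j=i+d$, so $\reg(I)=\max\{j-i:\beta_{i,j}(I)\neq0\}=d=|V(G)|$.

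For (c) and (d) I would invoke the remark following Theorem~\ref{linear quotients}, namely $|\set(m_i)|=|S_i|$. Substituting this into Proposition~\ref{betti} gives $\beta_i(I)=\sum_{S\in\Ind(G)}{|S|\choose i}$, and collecting the independent sets according to their cardinality $k$ (there being $s_k$ of each) yields $\beta_i(I)=\sum_{k=0}^{\alpha(G)}s_k{k\choose i}$, which is (c). The same proposition gives $\projdim(I)=\max_{S\in\Ind(G)}|S|=\alpha(G)$, and the short exact sequence $0\to I\to T\to T/I\to 0$ raises the projective dimension by one, so $\projdim(T/I)=\alpha(G)+1$, which is (d).

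For (e) I would use the minimal primary decomposition from Theorem~\ref{prim dec}: every minimal prime is either $(s_i,t_i)$ or $(t_i,t_j)$, hence generated by two of the $2|V(G)|$ variables and of height $2$. Since $I$ is radical, all associated primes are minimal and of height $2$, so $\dim(T/I)=2|V(G)|-2$.

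The main obstacle is (f), and here I would combine (d) and (e) through the Auslander--Buchsbaum formula. Since $T$ is a polynomial ring it is regular, so $\depth T=2|V(G)|$ and $\projdim_T(T/I)+\depth(T/I)=2|V(G)|$; with (d) this gives $\depth(T/I)=2|V(G)|-\alpha(G)-1$. Comparing with the dimension from (e), the ring $T/I$ is Cohen--Macaulay if and only if $2|V(G)|-\alpha(G)-1=2|V(G)|-2$, i.e.\ if and only if $\alpha(G)=1$. The final step is the combinatorial translation: $\alpha(G)=1$ means $G$ has no independent set of two vertices, equivalently every pair of vertices is adjacent, which is exactly the condition that $G$ be complete. (Alternatively one could obtain (f) via Theorem~\ref{Eagon--Reiner} by dualizing the decomposition of Theorem~\ref{prim dec} to $I^\vee=(s_it_i:i\in V(G))+(t_it_j:\{i,j\}\in E(G))$ and deciding when this degree-two ideal has a linear resolution, but the Auslander--Buchsbaum route is the shortest given that (d) and (e) are already available.)
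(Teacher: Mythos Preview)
Your proof is correct and follows essentially the same route as the paper: (a)--(d) are read off from Theorem~\ref{linear quotients} together with Proposition~\ref{betti} and the observation $|\set(m_i)|=|S_i|$, (e) from Theorem~\ref{prim dec}, and (f) by comparing $\projdim(T/I)$ with the codimension. The only cosmetic difference is that the paper states (f) in one line (``$T/I$ is Cohen--Macaulay if and only if $\alpha(G)+1=2$''), leaving the Auslander--Buchsbaum computation implicit, whereas you spell it out; this is the same argument.
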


\begin{proof} The statement $(a)$ follows by Theorem \ref{linear quotients}. In particular, we immediately obtain the formula for the Castelunovo--Mumford regularity.

In order to prove $(c)$, we apply Theorem \ref{betti} and use the notations mentioned above. 

For $(d)$, we have
$$\projdim(T/I)=\max\{i: \beta_i(T/I)\neq 0\}=\max\{i: \beta_{i-1}(I)\neq 0\}=\alpha(G)+1.$$
%$$=\max\{|S_j|: S_j \mbox{ is a maximum independent set of }G\}+1=\alpha(G)+1.$$

The statement $(e)$ is a consequence of the primary decomposition, Theorem \ref{prim dec}.

At $(f)$, one has that $T/I$ is Cohen--Macaulay if and only if $\alpha(G)+1=2$, equivalently $G$ is the complete graph.
\end{proof}

As it follows from the previous result, the invariants of the monomial ideal of independent sets can be computed if one knows all the cardinalities of the independent sets of the graph, respectively its independence number. Actually, the problem of computing the independence number of an arbitrary graph is fundamental in theoretical computer science. In fact, for the maximum independent set problem, some approximation algorithms were given only for particular classes of graphs. A more difficult question is to determine the number of all the independent sets in a graph, on the same cardinality.

Furthermore, the Cohen--Macaulay monomial ideals of independent sets can be characterized, using Alexander duality:

\begin{Theorem}\label{dual}Let $I\subset T$ be the monomial ideal of independent sets associated to a graph $G$.
The following statements are equivalent:
\begin{itemize}
	\item [(a)] $G$ is the complete graph;
	\item [(b)] $T/I$ is Cohen--Macaulay;
	\item [(c)] The Alexander dual of $I$, $I^\vee$, has a linear resolution;
	\item [(d)] $I^\vee$ has linear quotients;
	\item [(e)] All the powers of $I^\vee$ have a linear resolution.
\end{itemize}
\end{Theorem}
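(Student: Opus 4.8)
The plan is to present the five conditions as an assembly of three facts already on the table: the Cohen--Macaulay criterion of Corollary \ref{invariants}(f), the Eagon--Reiner theorem (Theorem \ref{Eagon--Reiner}), and the description of degree-two ideals in Theorem \ref{powers}. Since Corollary \ref{invariants}(f) already says that $T/I$ is Cohen--Macaulay exactly when $G$ is complete, the equivalence (a) $\Leftrightarrow$ (b) is free, and I would spend the effort linking (b) with (c), (d) and (e).

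The decisive preliminary step is to write down the Alexander dual $I^\vee$ explicitly. Viewing $I = I_\Delta$ as the Stanley--Reisner ideal of a simplicial complex $\Delta$ on the vertex set $\{s_i,t_i : i\in V(G)\}$, Alexander duality turns the minimal primes of $I$ into the minimal generators of $I^\vee$: each minimal monomial prime of $I$ is replaced by the product of the variables that generate it. Substituting the minimal primary decomposition of Theorem \ref{prim dec}, each prime $(s_i,t_i)$ contributes the generator $s_it_i$ and each prime $(t_i,t_j)$ with $\{i,j\}\in E(G)$ contributes $t_it_j$, so that
$$I^\vee = (s_it_i : i\in V(G)) + (t_it_j : \{i,j\}\in E(G)).$$
The crucial feature I want to read off from this formula is that $I^\vee$ is generated entirely in degree two.

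With this in hand the rest is immediate. Because $I^\vee$ is generated in degree two, Theorem \ref{powers} applies to it directly and delivers (c) $\Leftrightarrow$ (d) $\Leftrightarrow$ (e) with no further argument. To connect (b) and (c) I would apply Eagon--Reiner to $I^\vee$ itself: interpreting $I^\vee$ as the Stanley--Reisner ideal of $\Delta^\vee$ and using the double-dual identity $(I^\vee)^\vee = I$, Theorem \ref{Eagon--Reiner} reads ``$I^\vee$ has a linear resolution if and only if $T/I$ is Cohen--Macaulay'', which is precisely (c) $\Leftrightarrow$ (b). Chaining (a) $\Leftrightarrow$ (b) from Corollary \ref{invariants}, this equivalence, and the equivalences from Theorem \ref{powers} closes the cycle through all five statements.

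The main obstacle is not a deep step but the bookkeeping: one must identify $I^\vee$ correctly from the primary decomposition and, above all, notice that it is generated in degree two, since that is exactly the hypothesis that unlocks Theorem \ref{powers}. A secondary point requiring care is to apply Eagon--Reiner to $\Delta^\vee$ rather than to $\Delta$, so that the Cohen--Macaulay ring produced is $T/I$ and not $T/I^\vee$; the double-dual identity is what legitimizes this substitution.
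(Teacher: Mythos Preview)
Your proof is correct and follows essentially the same route as the paper: Corollary~\ref{invariants}(f) for (a)$\Leftrightarrow$(b), Eagon--Reiner for (b)$\Leftrightarrow$(c), and Theorem~\ref{powers} for (c)$\Leftrightarrow$(d)$\Leftrightarrow$(e). Your explicit computation of $I^\vee$ from the primary decomposition, confirming it is generated in degree two, supplies a justification that the paper leaves implicit when invoking Theorem~\ref{powers}.
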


\begin{proof}
By Corollary \ref{invariants} (f) and Eagon--Reiner Theorem \cite{ER}, we obtain $(a)\Leftrightarrow(b)\Leftrightarrow(c)$. The equivalences $(c)\Leftrightarrow(d)\Leftrightarrow(e)$ follows by Theorem \ref{powers}.
\end{proof}

\section{Applications on some classes of graphs}

In this section, we will analyze some particular classes of graphs. As it follows from Corollary \ref{invariants}, the invariants of the monomial ideal of independent sets of a graph may be computed if the cardinalities of independent sets are known. Since all the information concerning the number of the independent sets of a given cardinality in a graph is enclosed in the independence polynomial, we have to consider some particular classes of graphs for which the independence polynomial was computed. We will use especially the results from \cite{A}, \cite{BH}, \cite{LM} and \cite{LM1}.

\subsection{The path graph $P_n$}

Let $G=P_n$ be the path graph on the vertex set $[n]$, with $n\geq 1$. In \cite{A}, J.L. Arocha derived a formula for the independence polynomial of this class of graphs, in terms of Fibonacci polynomials. In the following, we will use the description given in \cite[pp. 234]{LM}, for the coefficients of the independence polynomial of the graph $P_n$:
$$s_k={n+1-k\choose k}, \mbox{ for }0\leq k\leq \left\lfloor\frac{n+1}{2}\right\rfloor.$$

By applying this result we obtain:

\begin{Proposition}
Let $G=P_n$ be the path graph on $n$ vertices, $n\geq 1$. Let $I\subset T=K[s_i,t_i: 1\leq i\leq n]$ be the monomial ideal of independent sets, with the minimal monomial generating set $G(I)=\{m_i=\prod\limits_{r\in S_i}s_r\prod\limits_{r\notin S_i}t_r| S_i\in \Ind(P_n)\}$. Then the Betti numbers of $I$ are
$$\beta_i(I)=\sum\limits_{k=0}^{\left\lfloor\frac{n+1}{2}\right\rfloor}{n+1-k \choose k}{k \choose i}, \mbox{ for }i\geq 0,$$
and the projective dimension of the quotient ring $T/I$ is $\projdim(T/I)=\left\lfloor\frac{n+1}{2}\right\rfloor+1.$
\end{Proposition}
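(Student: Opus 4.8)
The plan is to invoke Corollary \ref{invariants} directly, since the Proposition is a straightforward specialization of the general formulas to the path graph $P_n$. The key combinatorial input is already supplied: Arocha's formula as recorded in \cite{LM} tells us that the number of independent sets of cardinality $k$ in $P_n$ is $s_k = \binom{n+1-k}{k}$, valid for $0 \leq k \leq \lfloor \frac{n+1}{2}\rfloor$, and that $s_k = 0$ for larger $k$. The first step is simply to substitute this expression for $s_k$ into the Betti number formula $\beta_i(I) = \sum_{k=0}^{\alpha(G)} s_k \binom{k}{i}$ from Corollary \ref{invariants}(c).

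The only genuine point requiring justification is the index of summation, namely that $\alpha(P_n) = \lfloor \frac{n+1}{2}\rfloor$. I would note that a maximal independent set in a path on $n$ vertices is obtained by taking alternate vertices, which yields exactly $\lceil \frac{n}{2}\rceil = \lfloor \frac{n+1}{2}\rfloor$ vertices; equivalently, this is the largest $k$ for which $\binom{n+1-k}{k}$ is nonzero, since we need $n+1-k \geq k$, i.e. $k \leq \frac{n+1}{2}$. Thus the upper limit $\alpha(G)$ in Corollary \ref{invariants} coincides with $\lfloor \frac{n+1}{2}\rfloor$, and the formula
$$\beta_i(I) = \sum_{k=0}^{\left\lfloor\frac{n+1}{2}\right\rfloor} \binom{n+1-k}{k}\binom{k}{i}$$
follows immediately.

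For the projective dimension, the plan is to apply Corollary \ref{invariants}(d), which states $\projdim(T/I) = \alpha(G) + 1$. Combining this with the identification $\alpha(P_n) = \lfloor \frac{n+1}{2}\rfloor$ established above gives $\projdim(T/I) = \lfloor \frac{n+1}{2}\rfloor + 1$ at once.

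In summary, there is no real obstacle here: the entire content of the Proposition is the combination of the two general formulas from Corollary \ref{invariants} with the known coefficients of the independence polynomial of $P_n$. The main thing to verify carefully is the compatibility of the summation range, i.e. that the independence number $\alpha(P_n)$ equals $\lfloor \frac{n+1}{2}\rfloor$, which is transparent from either the combinatorial description of maximal independent sets or from the support of the binomial coefficient $\binom{n+1-k}{k}$.
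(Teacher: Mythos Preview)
Your proposal is correct and matches the paper's approach exactly: the paper does not even write out a formal proof, simply stating that the Proposition follows ``by applying this result'' (i.e., Corollary~\ref{invariants}) together with the cited formula $s_k=\binom{n+1-k}{k}$. Your additional verification that $\alpha(P_n)=\lfloor\tfrac{n+1}{2}\rfloor$ is a helpful clarification that the paper leaves implicit.
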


\subsection{The centipede graph}

The centipede graph $W_n$, with $n\geq 1$, is the graph on the vertex set $\{a_1,\ldots,a_n\}\cup\{b_1,\ldots,b_n\}$. The set of edges of the centipede graph is given by $E(W_n)=\{{a_i,b_i}:1\leq i\leq n\}\cup\{\{b_j,b_{j+1}:1\leq j\leq n-1\}$.

	\[
\]
\begin{center}
	
	%TeXCAD (http://texcad.sf.net/) Picture. File: [centipede graph.pic]. Options on following lines.
%\grade{\on}
%\emlines{\off}
%\epic{\off}
%\beziermacro{\on}
%\reduce{\on}
%\snapping{\off}
%\pvinsert{% Your \input, \def, etc. here}
%\quality{8.000}
%\graddiff{0.005}
%\snapasp{1}
%\zoom{4.0000}
\unitlength 1mm % = 2.845pt
\linethickness{0.8pt}
\ifx\plotpoint\undefined\newsavebox{\plotpoint}\fi % GNUPLOT compatibility
\begin{picture}(86.25,29.25)(0,0)
\put(4.5,6.75){\line(1,0){47.25}}
%\dottedline(53,6.75)(69.25,6.75)
\multiput(52.93,6.68)(.95588,0){18}{{\rule{.8pt}{.8pt}}}
%\end
\put(70.25,7){\line(1,0){15.75}}
\put(4.25,6.75){\line(0,1){19.25}}
\put(20.25,7){\line(0,1){19.25}}
\put(37,6.75){\line(0,1){19.25}}
\put(51.75,7){\line(0,1){19.25}}
\put(70.25,7.25){\line(0,1){19.25}}
\put(86,6.75){\line(0,1){19.25}}
\put(4.5,29.25){$a_1$}
\put(20,29.25){$a_2$}
\put(36,29.25){$a_3$}
\put(52,29.25){$a_4$}
\put(70,29.25){$a_{n-1}$}
\put(85,29.25){$a_n$}
\put(85,3){$b_n$}
\put(70,3){$b_{n-1}$}
\put(52,3){$b_4$}
\put(36,3){$b_3$}
\put(20,3){$b_2$}
\put(4.5,3){$b_1$}
\end{picture}
\end{center}
\begin{center}
The centipede graph $W_n$
\end{center}

For this class of graphs, we will apply the results given in \cite[pp. 235]{LM} and \cite[pp. 486]{LM1} for the independence polynomial. That is, the number of independent sets of cardinality $k$ of the centipede graph is 
$$s_k=\sum\limits_{j=0}^{k}{n-j\choose n-k}{n+1-j\choose j},\ k\in\{0,1,\ldots,n\}.$$

As before, using this formula in Corollary \ref{invariants}, we obtain the following:

\begin{Proposition}
Let $G=W_n$ be the centipede graph, with $n\geq 1$, and let $I\subset T=K[s_i,t_i: 1\leq i\leq n]$ be the monomial ideal of independent sets of $W_n$, with the minimal monomial generating set $G(I)=\{m_i=\prod\limits_{r\in S_i}s_r\prod\limits_{r\notin S_i}t_r| S_i\in\Ind(W_n)\}$. Then
$$\beta_i(I)=\sum\limits_{k=0}^{n}\sum\limits_{j=0}^{k}{n-j\choose n-k}{n+1-j\choose j}{k \choose i}, \mbox{ for }i\geq 0 \mbox{ and }\projdim(T/I)=n+1.$$
\end{Proposition}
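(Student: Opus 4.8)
The plan is to apply Corollary \ref{invariants} directly, since that result reduces both invariants to the sequence $(s_k)$ of numbers of independent sets of each cardinality together with the independence number $\alpha(W_n)$. The coefficient formula for the independence polynomial of the centipede graph, recalled above from \cite{LM} and \cite{LM1}, supplies exactly the values $s_k$, so the Betti number formula should follow by pure substitution into Corollary \ref{invariants}(c). The only genuine combinatorial input needed is the value of $\alpha(W_n)$.

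First I would compute the independence number. The edges $\{a_i,b_i\}$, $1\le i\le n$, form a perfect matching of $W_n$ into $n$ pairwise disjoint pairs, so every independent set contains at most one vertex from each pair and hence has cardinality at most $n$; on the other hand $\{a_1,\ldots,a_n\}$ is independent, since no two $a$-vertices are adjacent, and has size $n$. Thus $\alpha(W_n)=n$.

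Next, for the Betti numbers, I would insert the given expression for $s_k$ into the formula of Corollary \ref{invariants}(c). Since $\alpha(W_n)=n$, the outer index $k$ ranges over $0,\ldots,n$, and one obtains
$$\beta_i(I)=\sum_{k=0}^{n} s_k \binom{k}{i}=\sum_{k=0}^{n}\sum_{j=0}^{k}\binom{n-j}{n-k}\binom{n+1-j}{j}\binom{k}{i},$$
which is exactly the claimed formula. For the projective dimension, Corollary \ref{invariants}(d) gives $\projdim(T/I)=\alpha(W_n)+1=n+1$.

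There is no serious obstacle here: the statement is a formal consequence of Corollary \ref{invariants} once the coefficients $s_k$ and the independence number are in hand. The only point worth a brief check is that the top value $s_n$ is nonzero, so that the projective dimension is genuinely $n+1$ and not smaller. Indeed, setting $k=n$ collapses the inner binomial to $\binom{n-j}{0}=1$, giving $s_n=\sum_{j=0}^{n}\binom{n+1-j}{j}$, which counts the independent sets of the path on the $b$-vertices and is strictly positive; hence $\beta_n(I)=s_n\binom{n}{n}=s_n\neq 0$, confirming the value of the projective dimension.
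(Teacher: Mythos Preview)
Your proposal is correct and follows exactly the approach the paper takes: the statement is obtained by substituting the cited coefficient formula for $s_k$ into Corollary~\ref{invariants}(c) and~(d). In fact you supply more detail than the paper does, explicitly verifying $\alpha(W_n)=n$ via the perfect matching and confirming $s_n\neq 0$, whereas the paper simply invokes the formula and Corollary~\ref{invariants} without further comment.
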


\subsection{Powers of the cycle graph}

Let $G=(V(G),E(G))$ be a graph with the vertex set $V(G)=[n]$ and let $d$ be a positive integer. We recall that the the $d-$th power of $G$ is the graph denoted by $G^d$, which have the same vertex set $V(G)$ and two distinct vertices $i$ and $j$ are adjacent in $G^d$ if and only if their distance in $G$ is at most $d$.
  
In the last years, the powers of cycles and their natural generalization, the circulant graphs, were intensively studied. We will pay attention to the $d-$th power of the cycle graph $C_n$. 

\begin{center}
%TeXCAD (http://texcad.sf.net/) Picture. File: [C^2_10.pic]. Options on following lines.
%\grade{\on}
%\emlines{\off}
%\epic{\off}
%\beziermacro{\on}
%\reduce{\on}
%\snapping{\off}
%\pvinsert{% Your \input, \def, etc. here}
%\quality{8.000}
%\graddiff{0.005}
%\snapasp{1}
%\zoom{4.0000}
\unitlength 1mm % = 2.845pt
\linethickness{0.8pt}
\ifx\plotpoint\undefined\newsavebox{\plotpoint}\fi % GNUPLOT compatibility
\begin{picture}(50,48)(0,0)
\put(28,2){\line(2,1){14}}
%\emline(42,9)(47,16.75)
\multiput(42,9)(.033557047,.052013423){149}{\line(0,1){.052013423}}
%\end
%\emline(47,29.75)(41.75,38)
\multiput(47,29.75)(-.033653846,.052884615){156}{\line(0,1){.052884615}}
%\end
\put(41.75,38){\line(-2,1){13.5}}
%\emline(28.25,44.75)(13,37.5)
\multiput(28.25,44.75)(-.070930233,-.03372093){215}{\line(-1,0){.070930233}}
%\end
%\emline(13,37.5)(8.5,29.75)
\multiput(13,37.5)(-.03358209,-.057835821){134}{\line(0,-1){.057835821}}
%\end
%\emline(8.25,16.75)(13.25,9)
\multiput(8.25,16.75)(.033557047,-.052013423){149}{\line(0,-1){.052013423}}
%\end
%\emline(13.25,9)(27.75,2.25)
\multiput(13.25,9)(.072139303,-.03358209){201}{\line(1,0){.072139303}}
%\end
\put(28,44.75){\circle*{2}}
\put(41.75,37.5){\circle*{2}}
\put(46.75,29.75){\circle*{2}}
\put(47.25,16.75){\circle*{2}}
\put(41.75,9){\circle*{2}}
\put(28,1.75){\circle*{2}}
\put(13,9){\circle*{2}}
\put(8.75,16.25){\circle*{2}}
\put(8.25,29.75){\circle*{2}}
\put(13,37.5){\circle*{2}}
\put(46.75,29){\line(0,-1){12.25}}
%\emline(46.75,16.75)(28,2.25)
\multiput(46.75,16.75)(-.0436046512,-.0337209302){430}{\line(-1,0){.0436046512}}
%\end
%\emline(28,2.25)(8.25,16.5)
\multiput(28,2.25)(-.0466903073,.0336879433){423}{\line(-1,0){.0466903073}}
%\end
%\emline(8.25,16.5)(13,37.5)
\multiput(8.25,16.5)(.033687943,.14893617){141}{\line(0,1){.14893617}}
%\end
\put(13,37.5){\line(1,0){28.5}}
\put(41.5,37.5){\line(1,-4){5.25}}
\put(8.25,29.25){\line(0,-1){13}}
%\emline(28.25,44.5)(46.75,29.75)
\multiput(28.25,44.5)(.0422374429,-.0336757991){438}{\line(1,0){.0422374429}}
%\end
%\emline(46.75,29.75)(41.25,8.75)
\multiput(46.75,29.75)(-.033536585,-.12804878){164}{\line(0,-1){.12804878}}
%\end
\put(41.25,8.75){\line(-1,0){28.25}}
%\emline(13,8.75)(8.5,29.75)
\multiput(13,8.75)(-.03358209,.156716418){134}{\line(0,1){.156716418}}
%\end
%\emline(8.5,29.75)(28.25,44.5)
\multiput(8.5,29.75)(.0450913242,.0336757991){438}{\line(1,0){.0450913242}}
%\end
\put(28,47.5){$1$}
\put(43.75,39){$2$}
\put(49,29.75){$3$}
\put(49,16.5){$4$}
\put(43.75,6){$5$}
\put(28.5,0){$6$}
\put(9.75,6){$7$}
\put(5,16.5){$8$}
\put(5,29.75){$9$}
\put(9.75,39){$10$}
\end{picture}
\\
The graph $C_{10}^2$
\end{center}

In \cite[Theorem 3.3]{BH} it was computed the independence polynomial for the powers of cycles:

\begin{Lemma}\cite{BH}
Let $n$ and $d$ be integers with $n\geq d+1$ and $d\geq 1$. Then the independence polynomial of the $d-$th power of $C_n$ is 
$$I(C_n^d;x)=\sum\limits_{k=0}^{\left\lfloor \frac{n}{d+1}\right\rfloor}\frac{n}{n-dk}{n-dk\choose dk}x^k.$$ 
\end{Lemma}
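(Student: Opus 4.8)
The plan is to reduce the computation of $I(C_n^d;x)$ to a circular enumeration problem and then settle it with a cycle-lemma (marking) argument. First I would translate the adjacency relation of $C_n^d$: two vertices are adjacent exactly when their distance in $C_n$ is at most $d$, so a vertex set $S$ is independent in $C_n^d$ if and only if, listing the elements of $S$ in cyclic order, every two consecutive elements are separated by at least $d$ omitted vertices. Equivalently, the $k$ cyclic gaps of a $k$-element independent set are all $\ge d+1$. Hence the coefficient $s_k$ of $x^k$ in $I(C_n^d;x)$ is exactly the number of $k$-subsets of the $n$ vertices, placed on a circle, all of whose consecutive gaps are $\ge d+1$; the term $k=0$ records the empty set, so $s_0=1$.

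Next I would count these subsets by a transfer argument. I consider pairs $(S,p)$ with $S$ an admissible $k$-subset and $p\in S$ a marked vertex. Counting by $S$ first, each admissible $S$ yields exactly $k$ pairs, so there are $k\,s_k$ pairs in all. Counting by $p$ first, a pair $(S,p)$ is the same datum as the position of $p$ among the $n$ vertices together with the sequence of cyclic gaps $(d_1,\dots,d_k)$ read off starting from $p$, which is an ordered composition $d_1+\dots+d_k=n$ with every $d_i\ge d+1$. The substitution $e_i=d_i-(d+1)$ turns this into a composition of $n-k(d+1)$ into $k$ nonnegative parts, giving ${n-dk-1\choose k-1}$ compositions, with $n$ choices for the position of $p$. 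Equating the two counts gives $k\,s_k=n{n-dk-1\choose k-1}$, and the identity ${n-dk-1\choose k-1}=\frac{k}{n-dk}{n-dk\choose k}$ produces $s_k=\frac{n}{n-dk}{n-dk\choose k}$. The binomial ${n-dk-1\choose k-1}$ is nonzero precisely when $n-dk\ge k$, i.e. $k\le n/(d+1)$, so $k$ runs from $0$ to $\lfloor n/(d+1)\rfloor$, which fixes the stated range of summation.

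The step I expect to be the main obstacle is verifying that the marking correspondence $(S,p)\mapsto(\text{position of }p,\ (d_1,\dots,d_k))$ is a genuine bijection, with no admissible configuration lost or double counted; this is the familiar delicate point in circular counting, and it is exactly what forces the clean numerator $n$ against the denominator $n-dk$. I would pay particular attention to the boundary case $k=n/(d+1)$, where all gaps are forced to equal $d+1$ and the factor $\frac{n}{n-dk}=d+1$ must correctly count the rigid rotations. As a consistency check I would specialize to $d=1$, where the formula collapses to the classical count $\frac{n}{n-k}{n-k\choose k}$ of $k$-subsets of a cycle with no two adjacent vertices.
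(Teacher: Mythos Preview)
The paper does not give its own proof of this lemma: it is quoted verbatim from \cite{BH} and used as a black box. So there is no argument in the paper to compare your approach against.

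Your marking/cycle-lemma argument is correct and is in fact the standard way to establish this count. The translation ``independent in $C_n^d$ $\Longleftrightarrow$ all cyclic gaps $\ge d+1$'' is right, the double count of pairs $(S,p)$ via compositions is a genuine bijection (given $p$ and the ordered gap sequence one rebuilds $S$ uniquely, and conversely), and the boundary case $k=n/(d+1)$ behaves exactly as you describe. The $k=0$ term is handled separately, as it must be.

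One point worth flagging: your computation produces
\[
s_k=\frac{n}{n-dk}\binom{n-dk}{k},
\]
with lower index $k$, whereas the lemma as transcribed in the paper has lower index $dk$. Your version is the correct one (and is what actually appears in \cite{BH}); the displayed formula in the paper carries a typo. This is easy to see already at $k=1$, where the number of singletons must be $n$, and indeed $\frac{n}{n-d}\binom{n-d}{1}=n$ while $\frac{n}{n-d}\binom{n-d}{d}$ does not equal $n$ for $d\ge 2$. So your proof is fine; it simply proves the intended statement rather than the misprinted one.
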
 

This result allows us to compute the invariants described in Corollary \ref{invariants} for the powers of cycles:

\begin{Proposition}
Let $G=C_n^d$ be the $d-$th power of the cycle graph $C_n$, $n\geq d+1$ and $d\geq 1$. 
Let $I\subset T=K[s_i,t_i: 1\leq i\leq n]$ be the monomial ideal of independent sets. Then
$$\beta_i(I)=\sum\limits_{k=0}^{\left\lfloor \frac{n}{d+1}\right\rfloor}\frac{n}{n-dk}{n-dk\choose dk}{k \choose i}, \mbox{ for }i\geq 0 \mbox{ and }\projdim(T/I)=\left\lfloor \frac{n}{d+1}\right\rfloor+1.$$
\end{Proposition}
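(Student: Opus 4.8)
The plan is to read this statement off directly from Corollary \ref{invariants}, once the independence number of $C_n^d$ and the cardinalities of its independent sets are known; both pieces of combinatorial data are exactly what the preceding Lemma of \cite{BH} supplies. Thus the proof is a substitution, and there is no genuinely hard analytic step---the real content sits in Corollary \ref{invariants} and in the cited computation of the independence polynomial.

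First I would identify the coefficients $s_k$. By definition $s_k$ is the number of independent sets of $C_n^d$ of cardinality $k$, which is precisely the coefficient of $x^k$ in the independence polynomial $I(C_n^d;x)$. Reading this coefficient off from the Lemma of \cite{BH} gives $s_k=\frac{n}{n-dk}{n-dk\choose dk}$ for $0\le k\le\lfloor n/(d+1)\rfloor$; note that in this range $n-dk>0$, since $k\le\lfloor n/(d+1)\rfloor<n/d$, so the quoted expression is well defined.

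Next I would pin down the independence number. Since the independence polynomial has degree $\alpha(G)$ by construction, and the summation in the Lemma of \cite{BH} runs up to $k=\lfloor n/(d+1)\rfloor$, one gets $\alpha(C_n^d)=\lfloor n/(d+1)\rfloor$; concretely, selecting every $(d+1)$-st vertex around the cycle exhibits an independent set of this size, so the top index is attained. Feeding $\alpha(C_n^d)=\lfloor n/(d+1)\rfloor$ into Corollary \ref{invariants}(d) immediately yields $\projdim(T/I)=\lfloor n/(d+1)\rfloor+1$.

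Finally I would substitute both data into the Betti-number formula of Corollary \ref{invariants}(c), namely $\beta_i(I)=\sum_{k=0}^{\alpha(G)}s_k{k\choose i}$, to obtain the asserted expression $\beta_i(I)=\sum_{k=0}^{\lfloor n/(d+1)\rfloor}\frac{n}{n-dk}{n-dk\choose dk}{k\choose i}$. The only point meriting a moment's care is the coincidence between the upper summation limit $\alpha(G)$ appearing in Corollary \ref{invariants} and the top index $\lfloor n/(d+1)\rfloor$ coming from \cite{BH}; this is precisely the fact that the degree of the independence polynomial equals the independence number, which holds by the very definition of $I(G;x)$. I expect that this identification of ranges is the single step that is not purely mechanical, but it is settled in one line by the construction just recalled.
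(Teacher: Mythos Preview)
Your proposal is correct and matches the paper's approach exactly: the paper gives no separate proof for this Proposition, indicating only that the Lemma of \cite{BH} ``allows us to compute the invariants described in Corollary \ref{invariants},'' which is precisely the substitution you carry out. Your additional remarks on why $n-dk>0$ and why the upper summation index coincides with $\alpha(C_n^d)$ simply make explicit what the paper leaves implicit.
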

 
For the particular case $d=1$, we obtain the graph $G$ to be the cycle graph on $n$ vertices. Therefore, the invariants of the monomial ideal of independent sets associated to the cycle graph $C_n$ are of the form:

\begin{Corollary}
Let $G=C_n$ be the cycle graph, with $n\geq 2$ and let $I\subset T=K[s_i,t_i: 1\leq i\leq n]$ be the monomial ideal of independent sets of $G$. Then
$$\beta_i(I)=\sum\limits_{k=0}^{\left\lfloor \frac{n}{2}\right\rfloor}\frac{n}{n-k}{n-k\choose k}{k \choose i}, \mbox{ for }i\geq 0 \mbox{ and }\projdim(T/I)=\left\lfloor \frac{n}{2}\right\rfloor+1.$$
\end{Corollary}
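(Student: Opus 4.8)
The plan is to obtain this Corollary as the immediate specialization to $d=1$ of the preceding Proposition on powers of cycles. The key observation is that the cycle graph $C_n$ is nothing but its own first power, $C_n=C_n^1$, so every statement established for $C_n^d$ applies verbatim once we set $d=1$. Note in passing that the hypothesis $n\geq d+1$ of the Lemma from \cite{BH} becomes exactly $n\geq 2$, which is precisely the hypothesis imposed in the Corollary, so there is no loss of validity in the specialization.

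First I would read off the coefficients of the independence polynomial from the Lemma with $d=1$: the independence polynomial of $C_n$ is $I(C_n;x)=\sum_{k=0}^{\lfloor n/2\rfloor}\frac{n}{n-k}{n-k\choose k}x^k$, so that the number of independent sets of cardinality $k$ in $C_n$ is
$$s_k=\frac{n}{n-k}{n-k\choose k},\qquad 0\leq k\leq \left\lfloor\frac{n}{2}\right\rfloor .$$
In particular the independence number $\alpha(C_n)$ equals the largest index carrying a nonzero coefficient, namely $\alpha(C_n)=\lfloor n/2\rfloor$.

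Next I would feed these data into Corollary \ref{invariants}, which is available because $I$ is the monomial ideal of independent sets of the graph $G=C_n$. Part (c) of that Corollary gives $\beta_i(I)=\sum_{k=0}^{\alpha(G)}s_k{k\choose i}$; substituting $\alpha(C_n)=\lfloor n/2\rfloor$ together with the value of $s_k$ obtained above yields the claimed Betti number formula
$$\beta_i(I)=\sum_{k=0}^{\left\lfloor \frac{n}{2}\right\rfloor}\frac{n}{n-k}{n-k\choose k}{k \choose i},\qquad i\geq 0 .$$
Likewise, part (d) gives $\projdim(T/I)=\alpha(G)+1=\lfloor n/2\rfloor+1$, which is the second assertion.

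There is essentially no genuine obstacle here, since the argument is a direct substitution into results already proved; the only points meriting a line of care are that the coefficients supplied by \cite{BH} do count independent sets (guaranteed by the very definition of the independence polynomial) and that the summation range in the Betti formula matches $\alpha(C_n)=\lfloor n/2\rfloor$, which is exactly the top degree of $I(C_n;x)$. Both are immediate, so the proof reduces to invoking the $d=1$ case of the Proposition.
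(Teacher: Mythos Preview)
Your proposal is correct and follows exactly the approach of the paper: the Corollary is obtained by specializing the preceding Proposition on $C_n^d$ to $d=1$, noting that $C_n=C_n^1$ and that the hypothesis $n\geq d+1$ becomes $n\geq 2$. Your additional unpacking (reading off $s_k$ and $\alpha(C_n)$ and plugging into Corollary~\ref{invariants}) simply makes explicit what the Proposition already encodes.
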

 
	\[
\]

\end{document}